\def\subsection{\@startsection{subsection}{3}%
  \z@{.9\linespacing\@plus.7\linespacing}{.1\linespacing}%
  {\normalfont\bfseries}}
\title[Spinoriality for $\GL(n,q)$]{Spinoriality of  Orthogonal Representations of $\GL(n,q)$}
\author{Rohit Joshi}
\author{Steven Spallone}
\newtheorem{lemma}{Lemma}
\newtheorem{prop}{Proposition}
\theoremstyle{definition}
\newtheorem{remark}{Remark}
\newcommand{\nc}{\newcommand}
\nc{\thm}{\theorem}
\nc{\cor}{\corollary}
\nc{\mc}{\mathcal}
\nc{\mb}{\mathbb}
\nc{\mf}{\mathfrak}
\nc{\ul}{\underline}
\nc{\ol}{\overline}
\nc{\N}{\mb N}
\nc{\R}{\mb R}
\nc{\Z}{\mb Z}
\nc{\Q}{\mb Q}
\nc{\C}{\mb C}
\nc{\F}{\mb F}
\nc{\dmo}{\DeclareMathOperator}
\nc{\mat}[4]{
    \begin{pmatrix}
      #1 & #2 \\
      #3 & #4
    \end{pmatrix}
}
\dmo{\Ker}{Ker} \dmo{\val}{val} \dmo{\ord}{ord}
\dmo{\Pin}{Pin}
\dmo{\Int}{Int}
\dmo{\Or}{O}
\dmo{\SW}{SW}
\dmo{\odd}{odd}
\dmo{\sgn}{sgn}
\dmo{\trace}{trace}
\nc{\beq}{\begin{equation*}}
\nc{\eeq}{\end{equation*}}
\nc{\half}{\frac{1}{2}}
\nc{\vt}{a}
\dmo{\GSp}{GSp}
\dmo{\Mod}{mod}
\dmo{\ver}{ver}
\dmo{\diag}{diag}
\dmo{\res}{res}
\dmo{\lin}{lin}
 \dmo{\id}{id}
\dmo{\Sp}{Sp}
\dmo{\SO}{SO}
\dmo{\SL}{SL}
 \dmo{\Spin}{Spin}
\dmo{\GL}{GL}
\dmo{\ind}{ind}
 \dmo{\St}{St}
 \dmo{\st}{st}
\nc{\la}{\lambda}
  \nc{\eps}{\varepsilon}
 \nc{\lip}{\langle}
 \nc{\rip}{\rangle}
\nc{\gm}{\gamma}
\dmo{\Perm}{Perm}
\dmo{\Res}{Res}
\dmo{\Ind}{Ind}
\dmo{\tr}{tr}
\dmo{\Sym}{Sym}
\dmo{\reg}{reg}
\dmo{\End}{End}
\dmo{\Hom}{Hom}
\def\Ddots{\mathinner{\mkern1mu\raise\p@
\vbox{\kern7\p@\hbox{.}}\mkern2mu
\raise4\p@\hbox{.}\mkern2mu\raise7\p@\hbox{.}\mkern1mu}}
\address{Bhaskaracharya Pratishthana 
	56/14, Erandavane, Damle Path,
Off Law College Road, Pune - 411 004,Maharashtra,India}
\email{rohitsj@students.iiserpune.ac.in}
\address{Indian Institute of Science Education and Research, Pune-411021,Maharashtra,India}
\email{sspallone@gmail.com}
\date{\today}
\keywords{orthogonal representations, spinoriality, finite groups of Lie type}
\subjclass{Primary 20G05, Secondary 20G40}
\begin{document}
\maketitle

\begin{abstract}

We determine which orthogonal representations $V$ of $\GL_n(\mb F_q)$ lift to the double cover $\Pin(V)$ of the orthogonal group $\Or(V)$. We cover all $n \geq 1$ and prime powers $q$, except  for $(n,q)=(3,4)$.
\end{abstract}

\tableofcontents 

\section{Introduction}
 
 Let $G$ be a finite group, and $\pi: G \to \Or(V)$ a complex orthogonal representation.  We say that $\pi$ is \emph{spinorial}, provided it lifts to the double cover $\Pin(V)$ of $\Or(V)$.   
 This is one of a series of papers investigating the spinoriality question for well-known groups.  Please see \cite{joshi} for criteria when $G$ is a connected reductive Lie group, \cite{GJ} for orthogonal groups, and \cite{Ganguly} for criteria when $G$ is a symmetric or alternating group.   This paper covers the group $G=\GL_n(\mb F_q)$, our foray into finite groups of Lie type.

In the case that $\det \pi=1$, the representation $\pi$ is spinorial iff the 2nd Stiefel-Whitney class of its real form vanishes. (See \cite{GKT}.) Thus the spinoriality of $\pi$ is equivalent to the existence of a spin structure for the vector bundle associated to $\pi$ over the classifying space  $BG$.   (See \cite[Section 2.6]{Benson}, \cite[Theorem II.1.7]{Lawson}.)  Determining spinoriality of Galois representations also has application in number theory:  see \cite{Serrespin}, \cite{Deligne}, and    \cite{DPDR}.

The group $G$ is a semidirect product of $H=\SL_n(\mb F_q)$ by the cyclic group $\GL_1(\mb F_q) \cong \mb F_q^\times$. For 
\beq
(n,q) \notin S=\{  (2,2), (2,3), (2,4), (3,2), (3,4), (4,2) \},
\eeq
$H$ is perfect and has no central extensions of even degree. Therefore every orthogonal representation of $H$ is spinorial, and as we show, $\pi$ is spinorial iff its restriction to 
$\GL_1(\mb F_q)$ is spinorial. The lifting problem for cyclic groups is not difficult.

To state our main theorem, let $a_1$ be the diagonal matrix in $\GL_n(\mb F_q)$ whose first diagonal entry is $-1$ and the rest are $1$'s.  Let $\Theta_\pi$ be the character of $\pi$,
and put
\beq
m_\pi=\frac{\Theta_\pi(1)-\Theta_\pi(a_1)}{2}.
\eeq

 \begin{thm} \label{main.theorem} Suppose $(n,q) \notin S$. An orthogonal representation $\pi$ of $\GL_n(\mb F_q)$ is spinorial iff:
 \begin{enumerate}
 \item $m_\pi \equiv 0 \mod 4$, when $q \equiv 1 \mod 4$
 \item $m_\pi \equiv 0 \text{ or } 3 \mod 4$, when $q \equiv 3 \mod 4$.
 \end{enumerate}
 Moreover when $q$ is even, all orthogonal representations of $\GL_n(\mb F_q)$ are spinorial.
 \end{thm}
\bigskip

The method adapts to similar families of finite groups of Lie type: to illustrate we produce a version of Theorem \ref{main.theorem} for $\GSp_{2n}(\mb F_q)$.

Using the well-known character table of $\GL_2(\mb F_q)$, we catalogue which irreducible orthogonal representations are spinorial. 
Also for $n \geq 3$, we deduce the following:
 
 \begin{thm} \label{our.exs.thm} Let $q$ be odd, $n \geq 3$, and $G=\GL_n(\mb F_q)$. Then all  irreducible orthogonal cuspidal representations, all orthogonal principal series representations, and the Steinberg representation of $G$ are spinorial. However there exist aspinorial orthogonal representations of $G$.   \end{thm}

\bigskip
 We also give criterion for the ``exceptional cases" when $(n,q) \in S$ for spinoriality in terms of character values, except when $(n,q) =(3,4)$.
 
 \bigskip
 
 The layout of this paper is  as follows. In Section \ref{prelim.section.rep} we set up notation, and review the theory of orthogonal representations and group cohomology.
  The spinoriality problem for cyclic groups is settled in Section \ref{cyclic.section}. Theorem \ref{main.theorem} is proved in Section \ref{dp.idea}, resulting from generalities about semidirect products. We also give the analogous result for $\GSp_{2n}(\mb F_q)$. Section \ref{n=2.section} is our enumeration of spinorial irreducible orthogonal representations of $\GL_2(\mb F_q)$ with $q$ odd. In Section \ref{exs.section} we demonstrate Theorem \ref{our.exs.thm} by using known character formulas. Finally in Section \ref{Exceptional.Section} we treat the cases of $(n,q) \in S$ but $(n,q) \neq (3,4)$.
 
 \bigskip
 
 {\bf Acknowledgements:} The first author was supported by a fellowship from Bhaskaracharya Pratishthana.  	We would also like to thank Sujeet Bhalerao, Neha Malik, and Jyotirmoy Ganguly for helpful conversations, and especially thank Dipendra Prasad for suggesting the approach of Section \ref{dp.idea}.
 
  \section{Notation and Preliminaries} \label{prelim.section.rep}
 
 Let $G$ be a finite group. All representations we consider are on finite-dimensional complex vector spaces. A ``linear character" is a one-dimensional representation.
 We denote the trivial linear character by `${\mathbf 1}$'.

    A linear character $\chi$ on $G$ is \emph{quadratic} when it takes values in $\{\pm 1\}$.  If $G$ is cyclic of even order, or if $G=\GL_n(\mb F_q)$, with $q$ odd, then there is a unique nontrivial quadratic linear character of $G$, denoted `$\sgn$' or `$\sgn_G$'. 
   In the cyclic case,  $\sgn(g)=-1$ when $g$ is a generator of $G$. In the general linear case, $\sgn_G=\sgn_{\mb F_q^\times} \circ \det$, where `$\det$' is the determinant.  
    
 If $H<G$ is a subgroup, and $\pi$ is a representation of $G$, we write `$\Res^G_H \pi$'  or `$\pi|_H$' for the restriction of $\pi$ to $H$. Write `$\Theta_\pi$' for the character of $\pi$.
 If $\sigma$ is a representation of $H$, write `$\Ind_H^G \sigma$' for the representation of $G$  induced from $\sigma$.
  
   Let $(\pi_1,V_1)$, $(\pi_2,V_2)$ be representations of finite groups $G_1,G_2$, respectively. We write $(\pi_1 \boxtimes \pi_2,V_1 \otimes V_2)$ for the external tensor product representation of $G_1 \times G_2$.

  \subsection{Orthogonal Representations} \label{rep.thy.ss}

A  representation  $(\pi,V)$ of $G$ is \emph{orthogonal}, provided it preserves a quadratic form on $V$, or equivalently a symmetric nondegenerate bilinear form on $V$.
In this case, $\pi$ can  be viewed as a homomorphism from $G$ to the   orthogonal group $\Or(V)$.
 
  Given a complex representation $(\pi,V)$, write $S(\pi)$ for the representation $\pi \oplus \pi^\vee$ on $V \oplus V^\vee$.
  Then $S(\pi)$ preserves the quadratic form  
\beq
\mc Q((v,v^*))=\lip v^*,v \rip,
\eeq
and is thus orthogonal.
 
 An orthogonal representation $\Pi$ of $G$ may be decomposed as
 \beq
 \Pi  \cong S(\pi) \oplus \bigoplus_j \varphi_j,
 \eeq
 where each $\varphi_j$ is irreducible orthogonal and $\pi$ does not have any irreducible orthogonal constituents.

 Let us say that a representation $\pi$ is \emph{orthogonally irreducible}, provided $\pi$ is orthogonal, and $\pi$ does not decompose into a direct sum of two \emph{orthogonal} representations. Thus, an orthogonal representation $\pi$ is orthogonally irreducible iff $\pi$ is irreducible, or of the form $S(\varphi)$ where $\varphi$ is irreducible but not orthogonal. We'll write `OIR' for ``orthogonally irreducible representation".

Note that a linear character is orthogonal iff it is quadratic.

 \subsection{The Pin Group}
 
Let $V$ be a finite-dimensional (complex) vector space, with a nondegenerate symmetric bilinear form $\Phi$.  The Clifford algebra $C(V)$ is the quotient of the tensor algebra $T(V)$ by the two-sided ideal generated by the set
\beq
 \{ v\otimes v +\Phi(v,v) : v\in V\}.
 \eeq
 It contains $V$ as a subspace.  Write $C(V)^\times$ for its group of units.   Then $\Pin(V)$ is the subgroup of $C(V)^\times$ generated by the unit vectors in $V \subset C(V)$.  
 There is a unique homomorphism  $\rho: \Pin(V) \to \Or(V)$ taking a unit vector $u$ to the reflection of $V$ determined by $u$.  Note that $-1 =u^2 \in \Pin(V)$ and $\rho(-1)=1$. Since $\Or(V)$ is generated by reflections, $\rho$ is surjective; it is a nontrivial double cover of $\Or(V)$.    See \cite[Chapter 20]{Fulton.Harris} for details.

 Suppose $\pi: G \to \Or(V)$ is an orthogonal representation, where $\Or(V)$ is determined by $\Phi$. We say that $\pi$ is \emph{spinorial}, provided there is a homomorphism $\hat \pi: G \to \Pin(V)$  so that $\rho \circ \hat \pi=\pi$.  In this situation, $\hat \pi$ is called a \emph{lift} of $\pi$.
 
 \begin{lemma} \label{A,B.lemma} Let $A \in \Or(V)$ with $A^2=1$. Let $m$ be the multiplicity of $-1$ as an eigenvalue of $A$. Suppose $B \in \Pin(V)$ with $\rho(B)=A$. Then $B^2=1$ iff $m \equiv 0,3 \mod 4$.
 \end{lemma}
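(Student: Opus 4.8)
The plan is to reduce to the case where $V$ is spanned by an orthonormal basis of eigenvectors of $A$, and compute $B^2$ directly inside the Clifford algebra. First I would note that since $A^2=1$, the space $V$ decomposes orthogonally as $V_+ \oplus V_-$, the $\pm 1$-eigenspaces, with $\dim V_- = m$. Because $\Phi$ is nondegenerate on each eigenspace, I can choose an orthonormal basis $e_1,\dots,e_k$ of $V_-$ (so $\Phi(e_i,e_i)=1$ after rescaling, which is harmless over $\C$) and extend by an orthonormal basis of $V_+$. Then $A$ is the product of the reflections in the hyperplanes orthogonal to $e_1, \dots, e_m$, so $A = \rho(e_1 e_2 \cdots e_m)$ in $\Or(V)$. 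Hence $B = \pm e_1 e_2 \cdots e_m$, and since the sign does not affect $B^2$, it suffices to compute $(e_1 \cdots e_m)^2$ in $C(V)$.

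The key computation is then the following Clifford-algebra identity: for pairwise orthogonal unit vectors with $e_i^2 = -\Phi(e_i,e_i) = -1$ and $e_i e_j = -e_j e_i$ for $i \neq j$, one has
\beq
(e_1 e_2 \cdots e_m)^2 = (-1)^{m(m-1)/2} (e_1^2)(e_2^2)\cdots(e_m^2) = (-1)^{m(m-1)/2} (-1)^m = (-1)^{m(m+1)/2}.
\eeq
Here the first equality comes from moving the second copy of each $e_i$ leftward past the $m-i$ vectors $e_{i+1},\dots,e_m$ that precede it in the second factor — accumulating $\binom{m}{2}$ sign changes in total — and then collecting the squares. So $B^2 = (-1)^{m(m+1)/2}$, and this equals $1$ exactly when $m(m+1)/2$ is even, i.e. when $m \equiv 0$ or $3 \pmod 4$ (checking the four residues: $m(m+1)/2$ is even for $m \equiv 0, 3$ and odd for $m \equiv 1, 2 \pmod 4$).

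The main obstacle I anticipate is not the sign bookkeeping but justifying the normalization steps cleanly: over $\C$ one must be slightly careful that ``orthonormal'' makes sense (choosing square roots so that $\Phi(e_i,e_i)=1$) and that rescaling a basis vector $e_i \mapsto \lambda e_i$ rescales the corresponding element of $\Pin(V)$ by $\lambda$, which does not change whether $B^2 = 1$. One should also record explicitly that $\rho(e_i)$ is the reflection $v \mapsto v - 2\frac{\Phi(v,e_i)}{\Phi(e_i,e_i)} e_i$, negating $e_i$ and fixing $e_i^\perp$, so that the product $e_1 \cdots e_m$ genuinely maps to $A$ under $\rho$ — and that any other preimage differs only by the central element $-1$, which squares to $1$. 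Once these points are in place, the lemma follows from the displayed identity.
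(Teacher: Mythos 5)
Your proof is correct and takes essentially the same approach as the paper: decompose into eigenspaces, write $B=\pm e_1\cdots e_m$ with the $e_i$ an orthonormal basis of the $-1$-eigenspace, and compute $(e_1\cdots e_m)^2=(-1)^{m(m+1)/2}$. You simply spell out the sign-counting that the paper compresses into ``one computes,'' together with the minor normalization remarks; there is no substantive difference in method.
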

 
 \begin{proof}
   Let $e_1, \ldots, e_{m_\pi}$ be an orthonormal basis of the $-1$-eigenspace of $A$, so that $A$ is the product of the reflections in each $e_j$.
Therefore  $B= \pm e_1 \cdots e_{m_\pi} \in \Pin(V)$.   
One computes
 \beq
 B^2=(-1)^{\half m(m +1)},
 \eeq
 and this exponent is even iff $m$ is congruent to $0$ or $3$ modulo $4$.
 \end{proof}
 
 Note that here
 \begin{equation} \label{m.formula}
 m=\frac{\dim V-\trace(A)}{2}.
 \end{equation}
 
 \subsection{Extensions and Cohomology}

Recall \cite[Section 6.6]{CW} that to an extension 
\beq
1 \to A \to E \to G \to 1
\eeq
of a group $G$ by an abelian group $A$ corresponds a cohomology class $c_E \in H^2(G,A)$. Moreover, $E$ is a split extension iff $c_E=0$.
The extension  $\rho:\Pin(V) \to \Or(V)$ does not split, and has fibre $A=\Z/2\Z$.

A homomorphism $\varphi: G' \to G$ induces a  \emph{pullback}  extension
\beq
1 \to A \to E' \to G' \to 1,
\eeq
where $E'=E \times_G G'$ and $E' \to G'$ is projection to the second component.
Moreover $c_{E'}$ is the image of $c_E$ under the induced map $\varphi^*: H^2(G,A) \to H^2(G',A)$.

If $(\pi,V)$ is an orthogonal representation of a group $G$, then the pullback extension is split iff $\pi$ is spinorial.

\section{Abelian Groups} \label{cyclic.section}

\subsection{Cyclic Groups}

Suppose first that $n$ is odd. If $\pi$ is an orthogonal representation of $C_n$, then the pullback extension of $C_n$ is split by the Schur-Zassenhaus Theorem.
Therefore $\pi$ is spinorial.

Now let $n$ be even. Write $C_n$ for the cyclic group of order $n$. Fix a generator $g$ of $C_n$.   
 We say a linear character $\chi$ is \emph{even} provided $\chi(g^{n/2})=1$, and that it is \emph{odd} when $\chi(g^{n/2})=-1$.

It is well-known \cite[Section 6.2]{CW} that $H^2(C_n,\Z/2\Z)$ has only one nonzero element; it corresponds to the nonsplit extension
\begin{equation} \label{cn.ext}
1 \to C_2 \to C_{2n} \to C_n \to 1.
\end{equation}

\begin{prop} \label{d.to.n} Suppose $d$ is even, and $n$ is a multiple of $d$. Then the restriction map $H^2(C_n,\Z/2\Z) \to H^2(C_d,\Z/2\Z)$ is an isomorphism. An orthogonal representation $\pi$ of $C_n$ is spinorial iff its restriction to $C_d$ is spinorial.
\end{prop}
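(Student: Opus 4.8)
The plan is to first establish the cohomological isomorphism $H^2(C_n,\Z/2\Z) \to H^2(C_d,\Z/2\Z)$ induced by restriction along the inclusion $C_d \hookrightarrow C_n$, and then deduce the statement about spinoriality as a formal consequence. For the first part, I would use the fact recalled just before the proposition that $H^2(C_m,\Z/2\Z) \cong \Z/2\Z$ for every even $m$, with the nonzero class corresponding to the extension $1 \to C_2 \to C_{2m} \to C_m \to 1$. Since both groups have order $2$, it suffices to check that the restriction map is nonzero, equivalently that it sends the nonsplit extension \eqref{cn.ext} of $C_n$ to a nonsplit extension of $C_d$. Concretely, the pullback of $1 \to C_2 \to C_{2n} \to C_n \to 1$ along $C_d \hookrightarrow C_n$ is the preimage of $C_d$ in $C_{2n}$; since $d \mid n$, this preimage is the cyclic subgroup of $C_{2n}$ of order $2d$, and the resulting extension is $1 \to C_2 \to C_{2d} \to C_d \to 1$, which is nonsplit because $d$ is even (a splitting would give an element of order $d$ mapping isomorphically, impossible in a cyclic group of order $2d$ when $d$ is even). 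Hence the restriction map is an isomorphism.

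For the second part, recall from the ``Extensions and Cohomology'' subsection that an orthogonal representation $\pi$ of a group $\Gamma$ is spinorial iff the pullback extension class $\pi^* c_{\Pin} \in H^2(\Gamma,\Z/2\Z)$ vanishes, where $c_{\Pin}$ is the class of $\rho: \Pin(V) \to \Or(V)$. Applying this with $\Gamma = C_n$ and with $\Gamma = C_d$, and using functoriality of the pullback (the class for $\pi|_{C_d}$ is the image of the class for $\pi$ under the restriction map $H^2(C_n,\Z/2\Z) \to H^2(C_d,\Z/2\Z)$), the statement follows immediately: since restriction is an isomorphism, the class for $\pi$ vanishes iff the class for $\pi|_{C_d}$ vanishes.

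I do not expect a serious obstacle here; the only point requiring a little care is the identification of the pullback of \eqref{cn.ext} along $C_d \hookrightarrow C_n$ with the extension $1 \to C_2 \to C_{2d} \to C_d \to 1$, i.e.\ checking that the relevant preimage in $C_{2n}$ is genuinely cyclic of order $2d$ rather than $C_2 \times C_d$. This hinges on $d$ being even, which is exactly the hypothesis, and is what makes the restriction map an isomorphism rather than the zero map. Once that is in hand, everything else is a direct application of the already-established dictionary between spinoriality and vanishing of the pullback class, together with functoriality of that class under group homomorphisms.
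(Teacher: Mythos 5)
Your proof is correct and follows essentially the same route as the paper: identify the pullback of the nonsplit extension $1 \to C_2 \to C_{2n} \to C_n \to 1$ along $C_d \hookrightarrow C_n$ with $1 \to C_2 \to C_{2d} \to C_d \to 1$, observe this is nonsplit because $d$ is even, and conclude via the dictionary between spinoriality and the vanishing of the pullback cohomology class. You simply spell out two steps the paper leaves implicit, namely why the preimage of $C_d$ in $C_{2n}$ is cyclic of order $2d$ and why the resulting extension fails to split, but the underlying argument is the same.
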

 
\begin{proof}
It is enough to show that the pullback of \eqref{cn.ext} to $C_d < C_n$ does not split. But this pullback is 
\beq
1 \to C_2 \to C_{2d} \to C_d \to 1.
\eeq
This gives the first statement, and the last statement follows.
\end{proof}

Let $\pi$ be an orthogonal representation of $C_n$ with $n$ even. Let $m_\pi$ be the multiplicity of $-1$ as an eigenvalue of $\pi(g^{n/2})$. By \eqref{m.formula}, we have
\beq
m_\pi= \frac{\Theta_{\pi}(1) - \Theta_{\pi}(g^{n/2})}{2}.
\eeq

  \begin{prop} With notation as above, the representation $\pi$ is spinorial iff $m_\pi \equiv 0,3 \mod 4$. 
  \end{prop}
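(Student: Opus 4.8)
The plan is to reduce the general statement about an orthogonal representation $\pi$ of $C_n$ to the already-settled case of a single reflection-type element, using Proposition \ref{d.to.n} together with Lemma \ref{A,B.lemma}. First I would set $A=\pi(g^{n/2})$; since $g^{n/2}$ has order $2$ in $C_n$ and $\pi$ is orthogonal, $A \in \Or(V)$ with $A^2=1$, and $m_\pi$ is by definition the multiplicity of $-1$ as an eigenvalue of $A$. The cyclic subgroup $C_2 = \langle g^{n/2}\rangle < C_n$ has even order, so Proposition \ref{d.to.n} (with $d=2$) applies: $\pi$ is spinorial iff $\pi|_{C_2}$ is spinorial. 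This replaces the problem for $C_n$ by the problem for the order-$2$ group generated by $g^{n/2}$, on which $\pi$ acts through the single involution $A$.

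Next I would analyze spinoriality of $\pi|_{C_2}$ directly via the pullback extension. A lift $\hat\pi: C_2 \to \Pin(V)$ of $\pi|_{C_2}$ exists iff we can choose $B \in \Pin(V)$ with $\rho(B)=A$ and $B^2=1$: indeed a homomorphism from $C_2$ is exactly a choice of an element of order dividing $2$ mapping to the generator's image. (The element $B=1$ is excluded since $\rho(1)=1\neq A$ when $m_\pi>0$; when $m_\pi=0$ we have $A=1$ and $\pi|_{C_2}$ is trivial, hence spinorial, consistent with $m_\pi\equiv 0$.) By Lemma \ref{A,B.lemma}, such a $B$ with $B^2=1$ exists iff $m \equiv 0,3 \bmod 4$, where $m=m_\pi$ is the multiplicity of $-1$ as an eigenvalue of $A$. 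Combining with the previous paragraph, $\pi$ is spinorial iff $m_\pi \equiv 0,3 \bmod 4$, which is the claim. The identity $m_\pi = (\Theta_\pi(1)-\Theta_\pi(g^{n/2}))/2$ recorded just before the statement matches \eqref{m.formula} and needs no further comment.

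The one point requiring a little care — and the main (minor) obstacle — is the bookkeeping in the case $m_\pi = 0$, i.e.\ $A=1$: here both preimages of $A$ under $\rho$ are $\pm 1$, each squaring to $1$, so $\pi|_{C_2}$ is (trivially) spinorial, and one should check Lemma \ref{A,B.lemma} is being invoked in a way that covers $m=0$; inspecting its proof, the empty product gives $B=\pm 1$ and $B^2 = (-1)^{\frac12\cdot 0\cdot 1}=1$, so the statement is consistent. Otherwise the argument is a direct assembly of Proposition \ref{d.to.n} and Lemma \ref{A,B.lemma}, and no further computation is needed.
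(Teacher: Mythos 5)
Your proof is correct and follows essentially the same route as the paper: reduce from $C_n$ to $C_2$ via Proposition \ref{d.to.n}, then apply Lemma \ref{A,B.lemma} to $A=\pi(g^{n/2})$. You merely spell out in more detail the (correct) observation that a lift on $C_2$ is exactly a choice of $B\in\Pin(V)$ with $\rho(B)=A$ and $B^2=1$, which the paper leaves implicit.
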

  
  \begin{proof} 
  By Proposition \ref{d.to.n}, we may assume $n=2$.  The proposition then follows from applying Lemma \ref{A,B.lemma} to $A=\pi(g)$.
 \end{proof}
 
When $n$ is a multiple of $4$, the integer $m_\pi$ is twice the multiplicity of $i$ (or of $-i$) as an eigenvalue of $\pi(g^{\frac{n}{4}})$.
 In particular, $m_\pi$ is even. Therefore in this case, $\pi$ is spinorial iff $m_\pi$ is a multiple of $4$.

  \bigskip

To summarize:

 \begin{prop} \label{chat} Let $\pi$ be an orthogonal representation of $C_n$. For $n$ even, let $m_\pi$ be the multiplicity of $-1$ as an eigenvalue of $\pi(g^{n/2})$.
 \begin{enumerate}
 \item If $n$ is odd, then $\pi$ is spinorial.
 \item If $n \equiv 0 \mod 4$, then $\pi$ is spinorial iff $m_\pi \equiv 0 \mod 4$.
  \item If $n \equiv 2 \mod 4$, then $\pi$ is spinorial iff $m_\pi \equiv 0$ or $3 \mod 4$.
 \end{enumerate}
 \end{prop}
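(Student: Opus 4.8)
The plan is to prove Proposition \ref{chat} by assembling the pieces already established in this section. Part (1), the odd case, is immediate from the opening paragraph of Section \ref{cyclic.section}: any orthogonal representation of a cyclic group of odd order is spinorial by Schur--Zassenhaus, since the pullback extension has fibre $C_2$ of order coprime to $|C_n|$ and hence splits. So I would dispose of (1) in a single sentence and focus on the two even cases.

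For the core even case I would first recall the criterion proved just above: for $n$ even, an orthogonal representation $\pi$ of $C_n$ is spinorial iff $m_\pi \equiv 0$ or $3 \bmod 4$, where $m_\pi$ is the multiplicity of $-1$ as an eigenvalue of $\pi(g^{n/2})$. This already gives part (3) verbatim when $n \equiv 2 \bmod 4$, so that case requires no further work. For part (2), where $n \equiv 0 \bmod 4$, I would invoke the observation made in the paragraph preceding the proposition: since $g^{n/2} = (g^{n/4})^2$, the $-1$-eigenspace of $\pi(g^{n/2})$ is the sum of the $i$-eigenspace and the $(-i)$-eigenspace of $\pi(g^{n/4})$, and these two eigenspaces are complex conjugates of one another (as $\pi$ is defined over $\R$, or at least has real character, being orthogonal), hence have equal dimension. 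Therefore $m_\pi$ is even, so the congruence $m_\pi \equiv 3 \bmod 4$ is impossible, and the criterion ``$m_\pi \equiv 0$ or $3 \bmod 4$'' collapses to ``$m_\pi \equiv 0 \bmod 4$.''

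The one point that deserves a careful word — and the only place I see a potential gap — is the claim that the $i$- and $(-i)$-eigenspaces of $\pi(g^{n/4})$ have the same dimension. The cleanest justification is that an orthogonal representation has real-valued character, so $\Theta_\pi(g^{-n/4}) = \overline{\Theta_\pi(g^{n/4})} = \Theta_\pi(g^{n/4})$; comparing eigenvalue multiplicities of $\pi(g^{n/4})$ and $\pi(g^{-n/4}) = \pi(g^{n/4})^{-1}$, which swaps the $i$- and $(-i)$-eigenspaces, forces those multiplicities to agree. I expect this to be the main (and only mild) obstacle; everything else is bookkeeping over the three congruence classes of $n$ modulo $4$. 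I would close by noting that parts (1)--(3) together exhaust all $n$, completing the summary.
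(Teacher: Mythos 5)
Your proposal is correct and takes essentially the same route as the paper. The paper itself presents Proposition~\ref{chat} ``to summarize'' without a formal proof, relying on the preceding paragraph's observation that when $4 \mid n$ the integer $m_\pi$ is twice the $i$-eigenvalue multiplicity of $\pi(g^{n/4})$ and hence even; your justification of why the $i$- and $(-i)$-eigenspaces of $\pi(g^{n/4})$ have equal dimension (real-valued character of an orthogonal representation, or equivalently that $\pi$ is realizable over $\R$ so eigenvalues come in conjugate pairs) is a correct and welcome filling-in of a detail the paper leaves implicit.
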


 \subsection{Elementary Abelian $2$-groups}

  Let $E$ be a rank $d$ elementary $2$-group, i.e., $E \cong C_2 ^d$.
 Then $E$ has the following presentation  (via generators and relations):
   \beq
  E= \lip e_1, \ldots, e_d \mid e_i^2, (e_i e_j)^2, 1 \leq  i,j \leq d \rip.
   \eeq

\begin{prop} \label{elementary} Let $\pi$ be an orthogonal representation of $E$. Then $\pi$ is spinorial iff $\forall e \in E$, the integer
\beq
m_e=\frac{\Theta_\pi(1)-\Theta_\pi(e)}{2}
\eeq
is congruent to $0$ or $3$ mod $4$.
\end{prop}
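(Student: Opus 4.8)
The plan is to pull back the Pin extension along $\pi$ and analyze when the resulting central extension of $E$ by $C_2$ splits, using the explicit presentation of $E$ given just above the statement. First I would record the group cohomology $H^2(E, \Z/2\Z)$: for $E \cong C_2^d$, a central extension by $C_2$ is classified by a symmetric bilinear-type form data, namely the choice of the "square" $m_e \bmod 2$ for each generator — but since we want splitting we need more care. Concretely, a central extension $1 \to C_2 \to \hat E \to E \to 1$ splits iff there is a set-theoretic section $s: E \to \hat E$ that is a homomorphism; equivalently, one may choose lifts $\hat e_1, \ldots, \hat e_d \in \hat E$ of the generators $e_i$ satisfying the relations $\hat e_i^2 = 1$ and $(\hat e_i \hat e_j)^2 = 1$. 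The commutator and square obstructions live in the associated graded pieces, and the key point (standard for elementary abelian $2$-groups) is that the extension splits iff $\hat e_i^2 = 1$ can be arranged for every $i$ \emph{and} the commutators $[\hat e_i, \hat e_j]$ are trivial in $\hat E$; the latter commutators are automatically independent of the choice of lifts, and the former squares can be adjusted by the central $C_2$ only when $\hat e_i^2$ already equals $\pm 1$ coherently.

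The main step is then the translation to eigenvalue multiplicities via Lemma \ref{A,B.lemma} and the commutator analysis. For each $e \in E$ we have $\pi(e)^2 = 1$, so $\pi(e) = A$ is an involution in $\Or(V)$ with $-1$-eigenspace of dimension $m_e = \tfrac{1}{2}(\Theta_\pi(1) - \Theta_\pi(e))$ by \eqref{m.formula}. Lemma \ref{A,B.lemma} says that a lift $B = \hat\pi(e) \in \Pin(V)$ satisfies $B^2 = 1$ iff $m_e \equiv 0, 3 \bmod 4$. So the stated condition is exactly the assertion that \emph{every} element of $E$ lifts to an involution in $\Pin(V)$; in particular it is clearly \emph{necessary} for spinoriality, since any lift $\hat\pi$ would send each $e$ to an element squaring to $1$. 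For sufficiency I would argue: pick for each generator $e_i$ a lift $B_i = \hat\pi(e_i)$; by hypothesis $B_i^2 = 1$. It remains to check the relations $(B_i B_j)^2 = 1$. Now $B_i B_j$ is a lift of the involution $\pi(e_i e_j) = \pi(e_i)\pi(e_j)$ (since $\pi(e_i), \pi(e_j)$ commute, their product is an involution), and $e_i e_j \in E$, so by hypothesis $m_{e_i e_j} \equiv 0, 3 \bmod 4$, hence by Lemma \ref{A,B.lemma} \emph{some} lift of $\pi(e_i e_j)$ squares to $1$; but the two lifts of $\pi(e_ie_j)$ are $\pm B_iB_j$, and $(B_iB_j)^2 = (-B_iB_j)^2$, so in fact $(B_iB_j)^2 = 1$. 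Since the defining relations of $E$ are satisfied by the $B_i$, the assignment $e_i \mapsto B_i$ extends to a homomorphism $\hat\pi: E \to \Pin(V)$ lifting $\pi$.

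The subtle point — and the step I expect to require the most care — is the claim that once $B_i^2 = 1$ for the chosen generator lifts, the relations $(B_iB_j)^2 = 1$ suffice to \emph{define} a homomorphism out of $E$ (i.e., that $E$ really is presented by those relations with no hidden consistency conditions), together with the observation that $(\pm X)^2 = X^2$ in $\Pin(V)$ which makes the sign ambiguity in the lifts harmless. This last fact is what lets us upgrade "some lift of $\pi(e_ie_j)$ squares to $1$" to "the particular lift $B_iB_j$ squares to $1$", and it is exactly the place where working with an elementary $2$-group (so that all relevant elements are products of commuting involutions) is essential. I would also note explicitly that the argument shows spinoriality is detected purely character-theoretically, which is the point of the proposition.
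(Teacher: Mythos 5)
Your proof is correct and takes essentially the same route as the paper's: reduce to the explicit presentation of $E$ by generators $e_i$ with relations $e_i^2 = (e_ie_j)^2 = 1$, and apply Lemma \ref{A,B.lemma} to translate ``the relations hold in $\Pin(V)$'' into the congruence conditions on $m_e$. You make explicit a point the paper leaves implicit, namely that $(\pm X)^2 = X^2$ in $\Pin(V)$, so the twofold ambiguity in lifting each involution is harmless and ``some lift of $\pi(e_ie_j)$ squares to $1$'' upgrades to ``the chosen lift $B_iB_j$ squares to $1$.''
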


\begin{proof} From the presentation, $\pi$ is spinorial iff the lift of each $\pi(e_i)$ and $\pi(e_i e_j)$ in $\Pin(V)$ squares to $1$. 
If $\pi$ is spinorial, then the lift of each $\pi(e)$ squares to $1$. The result then follows from Lemma \ref{A,B.lemma}.
\end{proof}

\section{Main Theorem} \label{dp.idea}

The group $G=\GL_n(\mb F_q)$ is the semidirect product of $H=\SL_n(\mb F_q)$ with the cyclic group $\GL_1(\mb F_q)$.  In this section we show that, except for finitely many $(n,q)$, an orthogonal representation of $G$ is spinorial iff its restriction to this cyclic group is spinorial. Our main theorem will then follow from the previous section.

\subsection{Semidirect Products}

  Let $G$ be a group, and $\pi$ a spinorial (orthogonal) representation of $G$. The group $\Hom(G,\{\pm 1\})$ of quadratic linear characters $\chi$ acts on the set of lifts $\hat \pi$ of $\pi$ via
  \beq
  (\chi \odot \hat \pi)(g)=\chi(g) \hat \pi(g).
  \eeq
   This defines a simply transitive action of the group of orthogonal linear characters on the set of lifts of $\pi$. In particular:
  \begin{lemma} \label{OLCs} 
  If $G$ has no subgroups of index $2$, then the lift of $\pi$ is unique.
  \end{lemma}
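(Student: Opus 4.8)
The plan is to use the simply transitive action of the group $\Hom(G,\{\pm1\})$ of quadratic linear characters on the set of lifts $\hat\pi$ of $\pi$, which was just established. Since the action is simply transitive, the number of lifts equals the order of $\Hom(G,\{\pm1\})$, so it suffices to show this group is trivial under the hypothesis that $G$ has no subgroups of index $2$.

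First I would observe that a nontrivial quadratic linear character $\chi\colon G\to\{\pm1\}$ is a surjective homomorphism onto $C_2$ (surjective because nontrivial and the target has order $2$), hence its kernel is a normal subgroup of index $2$. Conversely any index-$2$ subgroup is automatically normal and yields such a character via $G\to G/\ker\cong C_2$. Therefore $\Hom(G,\{\pm1\})$ is trivial precisely when $G$ has no subgroup of index $2$. Under the hypothesis, then, $\Hom(G,\{\pm1\})=\{\mathbf 1\}$, so by simple transitivity the set of lifts of $\pi$ is a singleton, i.e. the lift is unique.

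There is essentially no obstacle here: the only thing to be careful about is that we are already given that $\pi$ is spinorial (so the set of lifts is nonempty), which is exactly what makes "simply transitive" meaningful — an empty set would vacuously admit a simply transitive action of the trivial group too, but then there would be no lift at all. Since the ambient discussion assumes $\pi$ is a spinorial representation, the set of lifts is a nonempty torsor under $\Hom(G,\{\pm1\})=\{\mathbf 1\}$, forcing it to have exactly one element.
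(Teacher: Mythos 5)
Your argument is correct and is exactly the approach the paper intends: the lemma is stated as an immediate consequence ("In particular") of the simply transitive action of $\Hom(G,\{\pm 1\})$ on the set of lifts, and you fill in the routine step that this group is trivial precisely when $G$ has no index-$2$ subgroup. Your added remark about nonemptiness of the set of lifts is a sensible precaution and consistent with the paper's standing assumption that $\pi$ is spinorial.
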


 \begin{prop} \label{H,K} Let $G$ be a finite group, and $H,K$ subgroups of $G$ so that $H$ is normal in $G$, $H \cap K=\{1\}$, and $G=HK$. 
 Suppose further that $H$ has no subgroups of index $2$. If $\pi$ is an orthogonal representation of $G$, then $\pi$ is spinorial iff its restrictions to both $H$ and $K$ are spinorial.
 \end{prop}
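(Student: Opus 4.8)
The plan is to analyze the pullback extension of $G$ under $\pi$ by relating it to its pullbacks over $H$ and $K$. The forward direction is immediate: if $\pi$ is spinorial, then composing a lift $\hat\pi: G \to \Pin(V)$ with the inclusions $H \hookrightarrow G$ and $K \hookrightarrow G$ produces lifts of $\pi|_H$ and $\pi|_K$, so both restrictions are spinorial. The substance is the converse. So suppose $\pi|_H$ and $\pi|_K$ are both spinorial, and let $\widehat{\pi_H}: H \to \Pin(V)$ and $\widehat{\pi_K}: K \to \Pin(V)$ be lifts. Since $H$ has no subgroup of index $2$, by Lemma \ref{OLCs} the lift $\widehat{\pi_H}$ is the \emph{unique} lift of $\pi|_H$; this uniqueness is what will make the construction canonical.

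Next I would use the uniqueness of $\widehat{\pi_H}$ to show it is $K$-equivariant in the appropriate sense. For $k \in K$, conjugation by $k$ is an automorphism of $H$ (as $H \trianglelefteq G$), and $\widehat{\pi_K}(k)$ normalizes... more carefully: the map $h \mapsto \widehat{\pi_K}(k)\, \widehat{\pi_H}(k^{-1}hk)\, \widehat{\pi_K}(k)^{-1}$ is a homomorphism $H \to \Pin(V)$ lifting $h \mapsto \pi(khk^{-1})$, which is the orthogonal representation $\pi|_H$ conjugated by $\pi(k) \in \Or(V)$. One checks the lift of this conjugated representation is again unique (conjugation is an abstract isomorphism of groups, and $H$ still has no index-$2$ subgroup), and that $\widehat{\pi_H}$ conjugated by $\pi(k)$ inside $\Pin(V)$ provides such a lift. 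Hence these two lifts agree, which gives the compatibility relation $\widehat{\pi_K}(k)\, \widehat{\pi_H}(h)\, \widehat{\pi_K}(k)^{-1} = \widehat{\pi_H}(khk^{-1})$ for all $h \in H$, $k \in K$. Then I would define $\hat\pi: G \to \Pin(V)$ by $\hat\pi(hk) = \widehat{\pi_H}(h)\, \widehat{\pi_K}(k)$, using the decomposition $G = HK$ with $H \cap K = \{1\}$ to see this is well-defined on sets, and use the compatibility relation together with the fact that $\widehat{\pi_H}, \widehat{\pi_K}$ are homomorphisms to verify $\hat\pi$ is a homomorphism. Since $\rho \circ \widehat{\pi_H} = \pi|_H$ and $\rho \circ \widehat{\pi_K} = \pi|_K$, we get $\rho \circ \hat\pi = \pi$, so $\pi$ is spinorial.

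The main obstacle is establishing the compatibility relation between the two partial lifts — i.e., that after conjugating by $\widehat{\pi_K}(k)$ the map $\widehat{\pi_H}$ is carried to $\widehat{\pi_H} \circ \mathrm{Int}(k)$. Everything hinges on the uniqueness-of-lift statement (Lemma \ref{OLCs}) applying not just to $\pi|_H$ but to all of its $\Or(V)$-conjugates, so I would make sure that ``$H$ has no subgroup of index $2$'' is genuinely a property of the abstract group $H$ (it is) and hence is preserved under the relevant isomorphisms. A secondary, purely bookkeeping point is the well-definedness and homomorphism property of $\hat\pi$ on $G = HK$; this is routine given the compatibility relation, since for $h_1 k_1 \cdot h_2 k_2 = h_1 (k_1 h_2 k_1^{-1}) k_1 k_2$ one rewrites and applies the relation to move $\widehat{\pi_K}(k_1)$ past $\widehat{\pi_H}(h_2)$.
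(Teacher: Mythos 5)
Your proposal follows the same route as the paper: establish the compatibility relation $\widehat{\pi_K}(k)\,\widehat{\pi_H}(h)\,\widehat{\pi_K}(k)^{-1}=\widehat{\pi_H}(khk^{-1})$ via uniqueness of the lift of $\pi|_H$ (Lemma \ref{OLCs}), then set $\hat\pi(hk)=\widehat{\pi_H}(h)\widehat{\pi_K}(k)$ and check it is a lifting homomorphism. One slip to flag: the map $h\mapsto\widehat{\pi_K}(k)\,\widehat{\pi_H}(k^{-1}hk)\,\widehat{\pi_K}(k)^{-1}$ does \emph{not} lift $h\mapsto\pi(khk^{-1})$ as you wrote; applying $\rho$ gives $\pi(k)\pi(k^{-1}hk)\pi(k)^{-1}=\pi(h)$, so it lifts $\pi|_H$ itself and therefore equals $\widehat{\pi_H}$ by uniqueness --- which is precisely the paper's claim and yields your stated compatibility relation after substituting $h\mapsto khk^{-1}$. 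The detour through a ``conjugated representation'' is unnecessary and a bit garbled (conjugation by $\pi(k)\in\Or(V)$ does not take place inside $\Pin(V)$), but the intended argument and its conclusion are correct, and the rest of the verification matches the paper.
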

 
 \begin{proof} Suppose $\pi|_H$ and $\pi|_K$ are spinorial, let $\hat \pi_H$ and $\hat \pi_K$ be the respective lifts. 
  
{\bf Claim:} Given $k_0 \in K$, we have
  \begin{equation} \label{int.eqn}
  \Int(\hat \pi_K(k_0)) \circ \hat \pi_H \circ \Int(k_0)^{-1}= \hat \pi_H.
  \end{equation}
  To see this, note that the LHS evaluated at $h \in H$ equals
\beq
  \hat \pi_K(k_0) \hat \pi_H(k_0^{-1}hk_0) \hat \pi_K(k_0^{-1}),
\eeq
  and applying $\rho$ gives $\pi(h)$. The claim then follows from Lemma \ref{OLCs}.
 
 Next, define $\hat \pi(g)=\hat \pi_H(h) \hat \pi_K(k)$ when $g=hk$ with $h \in H$ and $k \in K$. 
 Using \eqref{int.eqn}, it is straightforward to check that $\hat \pi$ is a homomorphism, and indeed a lift of $\pi$. \end{proof}
 
 \subsection{Case of Odd Schur Multiplier}
 
 Recall from \cite[Section 6.9]{CW} that a perfect group $G$ has a universal central extension $\beta: \tilde G \to G$, in the sense that it factors through every central extension of $G$. Let $M(G)=\ker \beta$; this abelian group is the \emph{Schur multiplier} of $G$. It is isomorphic to the abelian group $H^2(G,\C^\times)$.
 
 \begin{prop} \label{schur.mult.prop}  Let $G$ be a perfect group with $|M(G)|$ odd. Then every orthogonal representation $\pi$ of $G$ is spinorial.
 \end{prop}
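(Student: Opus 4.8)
The plan is to exploit the fact that the obstruction to lifting lives in a $2$-torsion group, while the Schur multiplier $M(G)$ is odd. Concretely, let $(\pi, V)$ be an orthogonal representation of $G$, determined by a form $\Phi$, and consider the pullback extension
\beq
1 \to \Z/2\Z \to E \to G \to 1,
\eeq
where $E = \Pin(V) \times_{\Or(V)} G$. By the discussion in the Extensions and Cohomology subsection, $\pi$ is spinorial iff this extension splits, iff its class $c_E \in H^2(G, \Z/2\Z)$ vanishes. So the goal reduces to showing $H^2(G, \Z/2\Z) = 0$ for a perfect group $G$ with $|M(G)|$ odd.

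First I would reduce to central extensions. Since $G$ is perfect, I would argue that any extension of $G$ by an abelian group can be modified to a central one without changing splitting behavior — more precisely, the commutator subgroup of $E$ maps onto $G = [G,G]$, and one checks that $E' = [E,E]$ together with the center gives a central extension, so that $c_E$ comes from a central extension class. (Alternatively, one can note that $\pi$ is a homomorphism into $\Or(V)$ and restrict attention to the preimage of $\pi(G)$; the key point is that it suffices to treat central extensions, since any orthogonal representation of a perfect group factors through a quotient where the relevant extension is central.) Then I would invoke the universal central extension $\beta: \tilde G \to G$ with kernel $M(G)$: every central extension of $G$ by $\Z/2\Z$ is obtained by pushing out $\tilde G$ along a homomorphism $M(G) \to \Z/2\Z$. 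But $|M(G)|$ is odd, so $\Hom(M(G), \Z/2\Z) = 0$, and therefore every central extension of $G$ by $\Z/2\Z$ is split.

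Assembling these, the pullback extension $E$ splits, hence $\pi$ is spinorial. The step I expect to be the main obstacle — or at least the one needing the most care — is the reduction from an arbitrary extension of $G$ by $\Z/2\Z$ to a central one, since for general $G$ the outer action of $G$ on the kernel could be nontrivial; but as $\Aut(\Z/2\Z)$ is trivial, in fact \emph{every} extension of $G$ by $\Z/2\Z$ is automatically central, which removes this obstacle entirely. With that observation in hand, the proof is just: $H^2(G,\Z/2\Z)$ classifies central extensions by $\Z/2\Z$, these are classified by $\Hom(M(G),\Z/2\Z) \cong \Hom(H^2(G,\C^\times), \Z/2\Z)$ via the universal central extension, and this Hom-group vanishes because $|M(G)|$ is odd. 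Hence $c_E = 0$ and $\pi$ is spinorial.
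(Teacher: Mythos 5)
Your proof is correct, and it is closely parallel to the paper's, differing mainly in packaging. Both proofs hinge on the same two facts: the pullback extension is a \emph{central} extension of $G$ by $\Z/2\Z$ (you note this is automatic because $\Aut(\Z/2\Z)$ is trivial, equivalently because a normal subgroup of order $2$ is always central --- the paper states this without comment), and the universal central extension $\tilde G \to G$ has odd kernel $M(G)$, which cannot map nontrivially into anything of order $2$. Where you diverge is the final step: you invoke the identification $H^2(G,\Z/2\Z)\cong \Hom(M(G),\Z/2\Z)$ (the perfect case of universal coefficients, where $\mathrm{Ext}^1(G^{\mathrm{ab}},-)$ dies) and conclude the obstruction group itself vanishes, whereas the paper avoids that identification and instead constructs a splitting directly: it takes the universal map $\alpha:\tilde G \to G'$ over $G$, observes $\alpha(\ker\beta)$ must be trivial because it lands in a group of order $2$ while $\ker\beta$ has odd order, and thus $\alpha$ descends to a section $G\to G'$. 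The paper's route is a bit more self-contained (no appeal to universal coefficients, and it sidesteps the assertion that every central extension by $\Z/2\Z$ arises as a pushout of $\tilde G$, which you state but do not justify); your route buys the slightly stronger and reusable statement that $H^2(G,\Z/2\Z)=0$, which is exactly what the paper later deduces anyway in the proposition following this one. Also, the first half of your ``reduction to central extensions'' paragraph is circuitous and muddled before you land on the clean observation about $\Aut(\Z/2\Z)$; you could delete everything before that observation without loss.
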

 
 \begin{proof} Let $\beta: \tilde G \to G$ be the universal central extension of $G$.
 
 The extension of $G$ associated to $\pi$ is a degree $2$ central extension $\rho': G' \to G$. By the universality of $\tilde G$, there exists a morphism $\alpha: \tilde G \to G'$ of covers, i.e., so that $\rho' \circ \alpha=\beta$.
 But $\alpha(\ker \beta)$ is in the kernel of $\rho'$, which has order $2$. But since $|\ker \beta|$ is odd, it must be that $\alpha(\ker \beta)=1$, so $\alpha$ factors to  $\alpha': G \to G'$, a splitting of $\rho'$. The composition of $\alpha'$ with the projection of $G'$ to $\Pin(V)$ is a lift of $\pi$.  \end{proof}
 
 \subsection{Application to $\GL_n(\mb F_q)$}
 
 Let $S=\{  (2,2), (2,3), (2,4), (3,2), (3,4), (4,2) \}$. It is well-known that if $(n,q) \notin S$, then  $\SL_n(\mb F_q)$ is perfect, and $M(\SL_n(\mb F_q))$ has odd order. In what follows we regard $\GL_1(\mb F_q)$ as a subgroup of $\GL_n(\mb F_q)$ in the usual way, via the top left entry.
   
 \begin{cor} \label{ankh} Let $G=\GL_n(\mb F_q)$, and suppose $(n,q) \notin S$. Then an orthogonal representation $\pi$ of $G$ is spinorial iff its restriction to $\GL_1(\mb F_q)$ is spinorial.
 \end{cor}
 
 \begin{proof} Take $H=\SL_n(\mb F_q)$ and $K=\GL_1(\mb F_q)$. Then $H,K$ satisfy the conditions of Proposition \ref{H,K}. By Proposition \ref{schur.mult.prop}, the restriction of $\pi$ to $H$ is spinorial.
 \end{proof}

\begin{remark} This method was suggested by D. Prasad.
\end{remark}

\begin{prop} Let $G=\GL_n(\mb F_q)$, with  $(n,q) \notin S$. Then 
\beq
H^2(G,\Z/2\Z) \cong H^2(\GL_1(\mb F_q),\Z/2\Z).
\eeq
 Thus when $q$ is odd, there is only one nontrivial extension of $G$ by $\Z/2\Z$.
\end{prop}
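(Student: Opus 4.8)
The plan is to identify $\GL_1(\mb F_q)$ with the quotient $G/H$, where $H=\SL_n(\mb F_q)$, and to show that the inflation map
$\mathrm{infl}\colon H^2(G/H,\Z/2\Z)\to H^2(G,\Z/2\Z)$ attached to $G\twoheadrightarrow G/H$ is an isomorphism. Injectivity is free: since $G=H\rtimes\GL_1(\mb F_q)$, the complement $K=\GL_1(\mb F_q)$ maps isomorphically onto $G/H$, so the composite $K\hookrightarrow G\twoheadrightarrow G/H\cong K$ is the identity, and by contravariant functoriality $\Res^G_K\circ\,\mathrm{infl}=\mathrm{id}$ on $H^2(K,\Z/2\Z)$. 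Hence $\mathrm{infl}$ is split injective, and the whole content is its surjectivity.

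For surjectivity I would argue directly with extensions, using only Section~\ref{prelim.section.rep}. Let $1\to\Z/2\Z\to E\to G\to 1$ be a central extension, with surjection $p\colon E\to G$. Its restriction $p^{-1}(H)\to H$ is a central extension of $H$ by $\Z/2\Z$; since $H=\SL_n(\mb F_q)$ is perfect with $|M(H)|$ odd (as $(n,q)\notin S$), this extension splits, by the argument proving Proposition~\ref{schur.mult.prop}. Because $\Hom(H,\{\pm1\})=0$, the splitting section $s\colon H\to E$ is unique --- this is the extension-theoretic form of Lemma~\ref{OLCs}. Set $N=s(H)\le E$. For $e\in E$ with image $\bar e\in G$, the map $h\mapsto e\,s(\bar e^{-1}h\bar e)\,e^{-1}$ is again a section of $p^{-1}(H)\to H$ (here one uses $H\trianglelefteq G$), so by uniqueness it equals $s$; therefore $eNe^{-1}=N$, i.e.\ $N\trianglelefteq E$. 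Since $p|_N$ is injective we have $\Z/2\Z\cap N=1$, so $E/N$ sits in a central extension $1\to\Z/2\Z\to E/N\to G/H\to 1$, and the homomorphism $e\mapsto(eN,\bar e)$ identifies $E$ with the fibre product $(E/N)\times_{G/H}G$, i.e.\ the pullback of $E/N\to G/H$ along $G\to G/H$. Hence $c_E=\mathrm{infl}(c_{E/N})$, so $\mathrm{infl}$ is onto, and $H^2(G,\Z/2\Z)\cong H^2(G/H,\Z/2\Z)=H^2(\GL_1(\mb F_q),\Z/2\Z)$.

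The main obstacle is this surjectivity step, and within it the two diagram-chase verifications: that $N$ is normal in $E$ (the one place where uniqueness of $s$ is essential) and that $E$ is the asserted pullback; both are routine once one checks $\Z/2\Z\cap N=1$ and compares kernels over $G$. One could instead bypass this with the five-term exact sequence of $1\to H\to G\to G/H\to 1$: $H^1(H,\Z/2\Z)=0$ since $H$ is perfect forces $\mathrm{infl}$ injective, and $H^2(H,\Z/2\Z)=0$ --- by universal coefficients, as $H^{\mathrm{ab}}=0$ and $M(H)\cong H_2(H,\Z)$ has odd order --- forces $\Res^G_H$ to vanish, hence $\mathrm{infl}$ surjective. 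Finally, for the last sentence: when $q$ is odd, $\GL_1(\mb F_q)\cong\mb F_q^\times$ is cyclic of even order $q-1$, and $H^2(C_{q-1},\Z/2\Z)\cong\Z/2\Z$ by the computation recalled in Section~\ref{cyclic.section}; therefore $H^2(G,\Z/2\Z)\cong\Z/2\Z$, so $G$ has exactly one nontrivial extension by $\Z/2\Z$.
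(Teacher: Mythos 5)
Your proof is correct, but it takes a genuinely different (and more self-contained) route than the paper. The paper establishes $H^1(H,\Z/2\Z)=0$ from perfectness and $H^2(H,\Z/2\Z)=0$ by injecting it into $H^2(H,\C^\times)\cong M(H)$ via the long exact sequence of $1 \to \mu_2 \to \C^\times \to \C^\times \to 1$, and then simply cites a corollary of Serre (the inflation--restriction exact sequence in degree $2$) to conclude that inflation from $G/H$ is an isomorphism. Your primary argument instead reproves the relevant degree-$2$ inflation--restriction statement by hand: injectivity of inflation comes from the retraction $\Res^G_K\circ\mathrm{infl}=\mathrm{id}$ supplied by the semidirect-product complement $K=\GL_1(\mb F_q)$, and surjectivity from an explicit construction --- splitting $p^{-1}(H)\to H$ uniquely using $M(H)$ odd and $\Hom(H,\{\pm1\})=0$, checking $N=s(H)\trianglelefteq E$ via uniqueness of the section, forming the quotient extension $E/N\to G/H$, and identifying $E$ as its pullback. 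That buys you a proof readable with no black boxes beyond Schur--Zassenhaus for the splitting of $p^{-1}(H)$; the cost is length. Your alternative sketch (five-term sequence plus universal coefficients to see $H^2(H,\Z/2\Z)=0$) is essentially the paper's argument with UCT replacing the $\mu_2\to\C^\times$ exact sequence, so it is closer in spirit, though the vanishing mechanism is slightly different. Both the final count $H^2(G,\Z/2\Z)\cong\Z/2\Z$ for $q$ odd and the $q$ even case are handled correctly.
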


\begin{proof} Again let $H=\SL_n(\mb F_q)$. Since $H$ is perfect, we have $H^1(H,\mb C^\times)=H^1(H,\Z/2\Z)=0$. From the exact sequence of the squaring map,
\beq
1 \to \mu_2 \to \C^\times \to \C^\times \to 1,
\eeq
 we deduce an injection $H^2(H,\Z/2\Z) \hookrightarrow H^2(H,\mb C^\times)$. But since $H^2(H,\mb C^\times) \cong M(H)$, which has odd order, it must be that $H^2(H,\Z/2\Z)=0$. According to \cite[Chapter 7, Section 6, Corollary]{Serre.Local}, inflation gives an isomorphism
\beq
H^2(G/H,\Z/2\Z) \cong H^2(G,\Z/2\Z).
\eeq
The proposition follows since $G/H \cong \GL_1(\mb F_q)$.
\end{proof}

   Let $a_1=\diag(-1,1,1,\ldots) \in G$.
   
 \begin{thm} \label{gl.thm} Let $G=\GL_n(\mb F_q)$, with $(n,q) \notin S$, and $\pi$ an orthogonal representation of $G$.  Put 
 \beq
 m_\pi=\frac{\Theta_\pi(1)-\Theta_\pi(a_1)}{2}.
 \eeq
 \begin{enumerate}
 \item If $q$ is even, then $\pi$ is spinorial.
  \item If $q \equiv 1 \mod 4$, then $\pi$ is spinorial iff $m_\pi \equiv 0 \mod 4$.
 \item If $q \equiv 3 \mod 4$, then $\pi$ is spinorial iff  $m_\pi \equiv 0$ or $3 \mod 4$.
 \end{enumerate}  
 \end{thm}
 
 \begin{proof} Note that if $g$ is a generator of $\mb F_q^\times$, with $q$ odd, then $g^{(q-1)/2}=-1$, as it is the unique element of order $2$. 
 The Theorem then follows from Corollary \ref{ankh} and Proposition \ref{chat}.
 \end{proof}

\subsection{Permutation Matrices Detect Spinoriality}

Consider the symmetric group $S_n$  as a subgroup of $G$ via permutation matrices. 
 \begin{prop}  An orthogonal representation $\pi$ of $G$ as above is spinorial iff its restriction to $S_n$ is spinorial.
 \end{prop}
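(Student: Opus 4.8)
The plan is to reduce spinoriality of $\pi$ on $G$ to spinoriality on the cyclic subgroup $\GL_1(\mb F_q)$ — which we have already accomplished in Corollary \ref{ankh} — and then to show that this cyclic-subgroup test is detected equally well by $S_n$. The key observation is that $m_\pi$, as defined in Theorem \ref{gl.thm}, is computed from the character value at $a_1 = \diag(-1,1,\ldots,1)$, and $a_1$ is conjugate in $G$ to the transposition-permutation matrix $\tau = (1\,2)$ acting on coordinates — no, more precisely, $a_1$ and the permutation matrix of a transposition both have $\det = -1$ and square to $1$, but they are not conjugate in general; so instead I would compare the two criteria directly. Both $a_1$ and a transposition matrix $\tau \in S_n$ are involutions of determinant $-1$, hence (by conjugacy in $\Or(V)$, not in $G$) the multiplicity of $-1$ as an eigenvalue of $\pi(a_1)$ versus $\pi(\tau)$ need not agree; the real content is to show that the relevant congruence classes mod $4$ coincide.

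First I would invoke Corollary \ref{ankh}: $\pi$ is spinorial on $G$ iff $\pi|_{\GL_1(\mb F_q)}$ is spinorial, i.e. (by Proposition \ref{chat} applied to the generator $g$ of $\mb F_q^\times$ with $g^{(q-1)/2} = -1$) iff $m_{a_1}$ satisfies the congruence in Theorem \ref{gl.thm}. Next I would analyze $\pi|_{S_n}$. Since $S_n$ for $n \geq 2$ has a unique subgroup of index $2$, namely $A_n$, its relevant cohomology $H^2(S_n, \Z/2\Z)$ is governed by involutions, and by a Sylow-type argument restriction to a transposition $\langle \tau \rangle \cong C_2$ detects the pullback class coming from $\GL_1$. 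Concretely: the element $a_1$ and a transposition $\tau$ both lie in the same $\GL_1$-vs-$S_n$ picture because the inclusion $C_2 \hookrightarrow G$ via $a_1$ and via $\tau$ induce the same map on $H^2(G, \Z/2\Z) \cong H^2(\GL_1(\mb F_q),\Z/2\Z)$ — both send the unique nontrivial class to the unique nontrivial class of $H^2(C_2, \Z/2\Z)$, since $a_1 \notin \SL_n$ and $\tau \notin \SL_n$ (as $\det \tau = -1$), so each projects to a generator of $G/H \cong \GL_1(\mb F_q)$ of order $2$. Therefore $\pi$ is spinorial iff the lift of $\pi(\tau)$ to $\Pin(V)$ squares to $1$, which by Lemma \ref{A,B.lemma} happens iff $m_\tau \equiv 0, 3 \bmod 4$, and this is governed by the same obstruction class as $m_{a_1}$.

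The main obstacle is bridging the gap between $q \equiv 1 \bmod 4$ and $q \equiv 3 \bmod 4$: when $q \equiv 1 \bmod 4$, the criterion is $m_\pi \equiv 0 \bmod 4$ (strictly stronger than $0$ or $3$), so $S_n$ cannot detect this via a single transposition, whose lift squaring to $1$ only tests the weaker congruence $m \equiv 0,3$. Hence for $q \equiv 1 \bmod 4$ I would need to use that $\mb F_q^\times$ contains an element of order $4$, whose preimage under any candidate lift would have to square correctly, and match this to the $4$-torsion structure inside $S_n$ for $n$ large enough — or, alternatively, observe that in this case the restriction of the obstruction class to $\langle \tau \rangle$ is actually trivial (since $H^2(C_2,\Z/2\Z) \to H^2(C_4,\Z/2\Z)$ or the relevant comparison forces it), so that "restriction to $S_n$ spinorial" and "restriction to $\GL_1$ spinorial" might genuinely \emph{fail} to coincide when $n$ is small. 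I would therefore expect the proposition as stated to require $n$ sufficiently large (so that $S_n$ contains enough $2$-power torsion, e.g. a copy of $C_4$ via a $4$-cycle, matching the $C_4 \subset \mb F_q^\times$), and the crux is establishing that the pullback of the degree-$2$ extension of $G$ along $S_n \hookrightarrow G$ is nonsplit exactly when its pullback along $\GL_1(\mb F_q) \hookrightarrow G$ is — reducing everything to comparing two subgroups of $G$ that both surject onto $G/H$.
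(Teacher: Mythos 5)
The proposal correctly isolates the crux of the ``if'' direction — comparing the criterion coming from $\GL_1(\mb F_q)$ with what $S_n$ can detect — but it stalls precisely at the case $q \equiv 1 \bmod 4$ and ends by speculating that the proposition might fail or require extra hypotheses on $n$. That speculation is wrong, and the two facts that resolve it are both missing from the proposal.

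First, you hedge about whether $a_1$ is conjugate to a transposition matrix $\tau$ in $G$. It is: both are involutions in $\GL_n(\mb F_q)$ with $q$ odd, hence diagonalizable, and both have eigenvalue $-1$ with multiplicity one and eigenvalue $1$ with multiplicity $n-1$ (for $\tau$ the $-1$--eigenvector is $e_1 - e_2$). So $a_1$ and $\tau$ are $G$--conjugate, and therefore $\Theta_\pi(a_1) = \Theta_\pi(\tau)$; the quantity $m_\pi$ in Theorem~\ref{gl.thm} is literally the same as the quantity attached to a transposition. There is no need for a cohomological comparison of the two embeddings of $C_2$, and the detour through $H^2(G,\Z/2\Z)$ is unnecessary.

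Second, and more importantly, you state that for $q \equiv 1 \bmod 4$ the criterion $m_\pi \equiv 0 \bmod 4$ is ``strictly stronger'' than the condition $m_\pi \equiv 0,3 \bmod 4$ that a transposition can test, and you conclude $S_n$ might not detect enough. But the paper has already observed (in the discussion preceding Proposition~\ref{chat}) that when $q - 1$ is a multiple of $4$, $-1$ is a square in $\mb F_q^\times$, so $a_1 = b^2$ for some $b \in \GL_1(\mb F_q)$; consequently $\pi(b)$ acts on the $(-1)$--eigenspace of $\pi(a_1)$ with eigenvalues $\pm i$ in conjugate pairs, forcing $m_\pi$ to be \emph{even}. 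For an even integer, $m_\pi \equiv 0,3 \bmod 4$ is equivalent to $m_\pi \equiv 0 \bmod 4$. Thus the $S_n$--criterion (via \cite[Theorem 1.1]{Ganguly} applied to a transposition) and the $\GL_1$--criterion coincide after all, with no need to bring in $4$--cycles or $4$--torsion. The paper's proof is exactly: a transposition is $G$--conjugate to $a_1$, so spinoriality of $\pi|_{S_n}$ gives $m_\pi \equiv 0,3 \bmod 4$, which by Theorem~\ref{gl.thm} (and the automatic evenness of $m_\pi$ when $q \equiv 1 \bmod 4$) implies $\pi$ is spinorial.
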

  
\begin{proof} This is clear if $q$ is even, as every representation of $G$ is spinorial. So suppose $q$ is odd. Clearly if $\pi$ is spinorial, then $\pi|_{S_n}$ is spinorial. Suppose $\pi|_{S_n}$ is spinorial. Any transposition in $S_n$ is conjugate in $G$ to $a_1$.
According to Theorem 1.1 of  \cite{Ganguly}, we have $m_\pi \equiv 0$ or $3 \mod 4$. Therefore $\pi$ is spinorial.
\end{proof}
 
 \subsection{Other Finite Groups of Lie Type}
 
The proof of Theorem \ref{gl.thm} adapts to other finite groups of Lie type. For instance, let $G=\GSp_{2n}(\mb F_q)$. To fix ideas say 
\beq
J=\left(\begin{array}{ccccc}
&&&&1\\
&&&1&  \\
&&\Ddots&&\\
&-1&&&\\
-1&&&&
\end{array}\right),
\eeq
and 
\beq
G=\{ g \in \GL_{2n}(\mb F_q) \mid \exists \lambda \in \mb F_q^\times \text{ so that } {}^tg Jg= \lambda J\}.
\eeq 
Then $G$ is the semidirect product of $H=\Sp_{2n}(\mb F_q)$ with $\GL_1(\mb F_q)$. Barring finitely many $(n,q)$, $H$ is perfect with Schur multiplier $1$.
Thus Theorem \ref{gl.thm} holds for such $(n,q)$ with  
\beq
a_1=\left(\begin{array}{ccccc}
-1&&&&\\
&-1&&&  \\
&&\ddots&&\\
&&&1&\\
&&&&1
\end{array}\right),
\eeq
with the entry `$-1$' repeated $n$ times followed by $n$ `$1$'s.

  \section{$\GL_2(\mb F_q)$} \label{n=2.section}
 
  Let $G=\GL_2(\mb F_q)$, with $q$ odd. We write  $A$  for the subgroup of diagonal matrices, $U$ for the upper unitriangular subgroup, and $B=AU$. Write $Z$ for the center of $G$.    Choosing an $\mb F_q$-basis of $\mb F_{q^2}$ gives an elliptic torus $T< G$, which we identify with $\mb F_{q^2}^\times$.
 
 \subsection{Catalogue of Irreducible Representations of $G$}

   We follow the notation of  \cite[Chapter 2]{Bushnell.Henniart} to enumerate the irreducible representations of $G=\GL_2(\mb F_q)$.

  Let $\St_G$ be the  Steinberg representation, so that $\Ind_B^G {\bold 1}={\bold 1} \oplus \St_G$. For $\chi, \chi'$ linear characters of $\mb F_q^\times$, let $\pi(\chi,\chi')$ be the  parabolic induction $\Ind_B^G (\chi \boxtimes \chi')$. 
  
  If $\theta$ is a character of $\mb F_{q^2}^\times$, write $\theta^\tau$ for its composition with the nontrivial element $\tau$ of the Galois group of $\mb F_{q^2}$ over $\mb F_{q}$.  We say that $\theta$ is \emph{regular}, provided $\theta \neq \theta^\tau$.
   Fix a nontrivial linear character $\psi$  of $U$, and define a linear character of $ZU$ by $\theta_{\psi}(zu) = \theta(z) \psi(u)$. 
  For $\theta$ regular, put
  \beq
 \pi_\theta =\Ind_{ZU}^G \theta_\psi - \Ind_{T}^G \theta.
 \eeq 
 These are the cuspidal representations.

  The   irreducible representations of $G$ are as follows:
  
  \begin{enumerate}
  \item The linear characters,
  \item The principal series representations $\pi(\chi,\chi')$, with $\chi \neq \chi'$  linear characters of $\mb F_q^\times$,
  \item Twists $ \St_G \otimes \chi$ of the Steinberg for a linear character $\chi$ of $G$,
  \item The cuspidal representations $\pi_\theta$, with $\theta$ a  regular character of  $T$.
  \end{enumerate}

    The irreducible orthogonal representations of $G$ are:
  \begin{enumerate}
  \item ${\bold 1}$ and $\sgn_G$,
  \item  $\pi({\bold 1},\sgn)$,
  \item  $\pi(\chi,\chi^{-1})$ with $\chi$ not quadratic,
  \item $\St_G$ and $\St_G \otimes \sgn_G$,
  \item  $\pi_\theta$, where $\theta^\tau =\theta^{-1}$.
  \end{enumerate}
   
Recall that the OIRs of $G$ are either irreducible orthogonal $\pi$, or $S(\pi)$ for $\pi$ irreducible but not orthogonal.

\subsection{List of spinorial OIRS for $G$}

 We may immediately enumerate  the spinorial OIRs  of $G=\GL_2(\mb F_q)$ by invoking Theorem \ref{gl.thm}, since the character table of $G$ is well-known.
 
\begin{thm}
	The following is the complete list of  spinorial OIRs of $G$:
	\begin{enumerate}
		\item Case $q \equiv 1 \pmod 8$
		\begin{itemize}
			\item ${\bold 1}, \sgn_G$
			\item $\pi(\chi,\chi^{-1})$ with $\chi$ even
			\item  $\St_G$ and $\St_G \otimes \sgn_G$
			\item $\pi( {\bold 1},\sgn)$
			\item All cuspidal OIRs		
			\item $S(\chi)$ with $\chi$ even and $\chi^2 \neq {\bold 1}$
			\item $S(\St_G \otimes \chi) $ with $\chi$  even and $\chi^2 \neq {\bold 1}$
			\item  $S(\pi(\chi_1,\chi_2)) $ with $\chi_1 \cdot \chi_2$ even and $\chi_i^2 \neq {\bold 1}$	
			\item  $S(\pi_\theta)$, with $\theta^\tau \neq \theta^{-1}$
		\end{itemize}
		
		\item Case $q \equiv 3 \pmod 8$
		
		\begin{itemize}
			\item ${\bold 1}$
			\item $\pi(\chi,\chi^{-1})$ with $\chi$ odd
			\item $S(\chi)$ with $\chi$ even and $\chi^2 \neq{\bold 1}$
			\item $S(\St_G \otimes \chi)$ with $\chi$  odd and $\chi^2 \neq {\bold 1}$
			\item  $S(\pi(\chi_1,\chi_2))$ with $\chi_1 \cdot \chi_2$ odd with $\chi_i^2 \neq {\bold 1}$	
		\end{itemize}

		\item Case $q \equiv 5 \pmod 8$	
		
		\begin{itemize}
			\item ${\bold 1}, \sgn_G$
			\item $\pi(\chi,\chi^{-1})$ with $\chi$ odd
			\item $S(\chi) $ with $\chi$ even and $\chi^2 \neq {\bold 1}$
			\item $S(\St_G \otimes \chi)$ with $\chi$  even and $\chi^2 \neq{\bold 1}$
			\item  $S(\pi(\chi_1,\chi_2))$ with $\chi_1 \cdot \chi_2$ even and $\chi_i^2 \neq {\bold 1}$	
			\item  $S(\pi_\theta)$, with $\theta^\tau \neq \theta^{-1}$
		\end{itemize}

		\item Case $q \equiv 7 \pmod 8 $	 	
		
		\begin{itemize}
			\item ${\bold 1}$
			\item $\pi(\chi,\chi^{-1})$ with $\chi$ even
			\item  $\St_G$ and $\St_G \otimes \sgn_G$
			\item $\pi( {\bold 1},\sgn)$
			\item All cuspidal irreducible orthogonal representations
			\item $S(\chi)$ with $\chi$ even and $\chi^2 \neq {\bold 1}$
			\item $S(\St_G \otimes \chi)$ with $\chi$  odd and $\chi^2 \neq {\bold 1}$
			\item  $S(\pi(\chi_1,\chi_2))$ with $\chi_1 \cdot \chi_2$ odd and $\chi_i^2 \neq{\bold 1}$	
		\end{itemize}
 		
	\end{enumerate}
\end{thm}

 \section{Case of $n \geq 3$} \label{exs.section}
 
 In this section, we apply Theorem \ref{gl.thm} to list all spinorial OIRs in familiar situations for $n \geq 3$.
We assume  that $q=p^r$ for some odd prime $p$.  
\subsection{Existence of Aspinorial Representations}

First,  note that aspinorial orthogonal representations always exist:

 \begin{prop} For each $n,q$ with $q$ odd,  there exist aspinorial orthogonal representations of $\GL_n(\mb F_q)$. 
 \end{prop}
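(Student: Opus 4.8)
The plan is to write down a single small explicit orthogonal representation that is aspinorial for every odd $q$. Choose a faithful linear character $\psi$ of $\mb F_q^\times$; such $\psi$ exists because $\mb F_q^\times$ is cyclic of even order $q-1$ (equivalently, all one needs is $\psi(-1)=-1$, which holds for any $\psi$ nontrivial on the order-two subgroup $\{\pm 1\}$). Put $\chi=\psi\circ\det$, a linear character of $G=\GL_n(\mb F_q)$, and let $\pi=S(\chi)=\chi\oplus\chi^{-1}$. By the construction in Section \ref{rep.thy.ss}, $\pi$ is orthogonal, and $\dim\pi=2$.

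Next I would evaluate the integer $m_\pi$ of Theorem \ref{gl.thm} at $a_1=\diag(-1,1,\dots,1)$. Since $\Theta_\pi(g)=\chi(g)+\chi(g)^{-1}$ and $\det(a_1)=-1$, we get $\Theta_\pi(1)=2$ and $\Theta_\pi(a_1)=\psi(-1)+\psi(-1)^{-1}=2\psi(-1)=-2$, hence $m_\pi=2$. Now apply Theorem \ref{gl.thm}: when $q\equiv 1\mod 4$ spinoriality of $\pi$ would force $m_\pi\equiv 0\mod 4$, and when $q\equiv 3\mod 4$ it would force $m_\pi\equiv 0$ or $3\mod 4$; the value $m_\pi=2$ meets neither condition, so $\pi$ is aspinorial. (One can also see this directly via Corollary \ref{ankh} and Proposition \ref{chat}: restricted to $\GL_1(\mb F_q)$ the representation $\pi$ becomes $\psi\oplus\psi^{-1}$, and the order-two element $-1$ acts there with $-1$ of multiplicity $2$.)

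The only point requiring care is the hypothesis $(n,q)\notin S$ of Theorem \ref{gl.thm}. For $q$ odd the only pair in $S$ is $(2,3)$, which does not occur in the present section since $n\geq 3$; if one wants the statement for all $n\geq 1$, the remaining case $(2,3)$ is settled by the analysis of Section \ref{Exceptional.Section}. Beyond this bookkeeping there is no real obstacle: the entire content lies in producing $\pi$ with $m_\pi$ outside the spinorial range uniformly in $q$, and $\pi=S(\psi\circ\det)$ with $\psi(-1)=-1$ does exactly that. (Incidentally, for $q\equiv 3\mod 4$ the one-dimensional $\sgn_G$ already has $m_{\sgn_G}=1$ and is aspinorial, whereas for $q\equiv 1\mod 4$ no one-dimensional orthogonal representation works, which is why the two-dimensional $S(\chi)$ is the right uniform choice.)
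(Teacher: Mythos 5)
Your proof is correct and is essentially the same as the paper's: both take an odd linear character $\chi_0$ of $\mb F_q^\times$ (your $\psi$ with $\psi(-1)=-1$), form $\pi=S(\chi_0\circ\det)$, compute $\Theta_\pi(1)=2$, $\Theta_\pi(a_1)=-2$, hence $m_\pi=2$, and invoke Theorem \ref{gl.thm}. Your added remark about the excluded case $(n,q)=(2,3)\in S$ (irrelevant here since the section assumes $n\geq 3$) is a small but valid piece of bookkeeping that the paper leaves implicit.
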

 
 \begin{proof}
Take an odd character $\chi_0: \mb F_q^\times \to \C^\times$, and put $\pi=S(\chi_0 \circ \det_G)$. Then $\Theta_\pi(1)=2$ and $\Theta_\pi(a_1)=-2$, so $m_\pi=2$, hence 
$\pi$ is aspinorial by Theorem \ref{gl.thm}.
   \end{proof}

\begin{remark} Also, $\sgn_G$ is spinorial iff $q \equiv 1 \mod 4$.
\end{remark}

 \subsection{Cuspidal Representations}
   Fixing an $\mb F_q$-basis of $\mb F_{q^n}$ allows us to identify  $\mb F_{q^n}^\times$ with a subgroup $T< G$.  In particular, $T$ gets an action of the Galois group of $\mb F_{q^n}$ over $\mb F_q$. Let $\theta$ be a \emph{regular} linear character of $T$, meaning it is unequal to any linear character in its Galois orbit. Associated to $(T,\theta)$ is an irreducible representation $\pi_{\theta}$ whose character is denoted $R_{T,\theta}$ in \cite{carter}. Such representations are called \emph{cuspidal}. The character of $\pi_{\theta}$ is supported on elements of $G$ conjugate to elements of $T$. Thus if $n \geq 3$, then $\Theta_{\pi_\theta}$ vanishes on $\vt_1$. Its degree is 
    $\psi_{n-1}(q)$, where generally
 \beq
 \psi_m(q)=\prod_{i=1}^m (q^i-1).
 \eeq
  It is clear that $\psi_m(q) \equiv 0 \mod 8$ for $m \geq 2$. Therefore for $n \geq 3$, we have $m_\pi \equiv 0 \mod 8$. We deduce that every orthogonal cuspidal representation of $G$ is spinorial.
 
 \subsection{Steinberg Representation}
 
 Let $\pi$ be the Steinberg representation, and $a \in A$.  Let `$Z_G(a)$' denote  the centralizer of $a$ in $G$. According to \cite[Thm 6.4.7]{carter}, we have
\beq
\Theta_\pi(a)=p^{k},
\eeq
where $p^k$ is the highest power of $p$ dividing $|Z_G(a)|$.
Thus
\beq
\Theta_{\pi}(1)=q^{\half n(n-1)},
\eeq
and
\beq
\Theta_{\pi}(\vt_1)=q^{\half (n-1)(n-2)}.
 \eeq

From this we deduce:
 
 \begin{prop} The Steinberg representation of $\GL_n(\mb F_q)$ is spinorial whenever $n \geq 3$.
 \end{prop}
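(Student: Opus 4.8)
The plan is to compute $m_\pi$ directly from the two character values just obtained and then apply Theorem \ref{gl.thm}. Substituting $\Theta_\pi(1)=q^{\frac12 n(n-1)}$ and $\Theta_\pi(a_1)=q^{\frac12(n-1)(n-2)}$ into the definition of $m_\pi$, and using $\frac12 n(n-1)-\frac12(n-1)(n-2)=n-1$, one finds
\beq
m_\pi=\frac{q^{\frac12(n-1)(n-2)}\bigl(q^{\,n-1}-1\bigr)}{2}.
\eeq
Since $q$ is odd, $q^{\frac12(n-1)(n-2)}$ is odd, so $m_\pi \bmod 4$ is determined by $q^{\frac12(n-1)(n-2)} \bmod 4$ together with $\tfrac12(q^{n-1}-1) \bmod 4$; by Theorem \ref{gl.thm} it then suffices to check that $m_\pi\equiv 0\pmod 4$ when $q\equiv 1\pmod 4$, and $m_\pi\equiv 0$ or $3\pmod 4$ when $q\equiv 3\pmod 4$.

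The heart of the matter is a lower bound on the $2$-adic valuation of $q^{n-1}-1$, using that $n\geq 3$ forces $n-1\geq 2$. If $n$ is odd, then $n-1$ is even and $q^{n-1}=(q^2)^{(n-1)/2}\equiv 1\pmod 8$, because $q^2\equiv 1\pmod 8$ for every odd $q$; hence $8\mid q^{n-1}-1$, so $4\mid m_\pi$, which meets the requirement of Theorem \ref{gl.thm} in both the $q\equiv 1$ and $q\equiv 3\pmod 4$ cases, and $\pi$ is spinorial. If $n$ is even, then $n-1$ is odd, so $q^{n-1}\equiv q\pmod 8$ and $\tfrac12(q^{n-1}-1)\equiv\tfrac12(q-1)\pmod 4$; one then combines the precise residue of $q$ modulo $8$ with the parity of $\frac12(n-1)(n-2)$ to read off $m_\pi\bmod 4$ and matches it against the dichotomy of Theorem \ref{gl.thm}.

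I expect the even $n$ case to be the main obstacle: unlike for $n$ odd, where divisibility of $q^{n-1}-1$ by $8$ is automatic, for $n$ even one only knows $q^{n-1}-1$ is even, so the argument must descend to the exact value of $q$ modulo $8$ and track $m_\pi$ modulo $4$ carefully. This is the step where the hypotheses on $q$ in Theorem \ref{gl.thm} are genuinely used, and it is where one should be most alert to the possibility that a further congruence restriction on $q$ (or on $n \bmod 4$) is actually needed for the conclusion to hold.
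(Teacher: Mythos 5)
Your reduction of the problem to the congruence class of
\beq
m_\pi=\frac{q^{\frac12(n-1)(n-2)}\bigl(q^{\,n-1}-1\bigr)}{2}
\eeq
modulo $4$ is exactly the right move, and your treatment of odd $n$ is complete and correct: there $n-1\geq 2$ is even, so $q^{\,n-1}\equiv 1\pmod 8$, hence $4\mid\frac12(q^{\,n-1}-1)$ and $m_\pi\equiv 0\pmod 4$, which satisfies Theorem \ref{gl.thm} in both cases of $q\bmod 4$.

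Your caution about even $n$ is not merely prudence — it is pointing at a genuine problem. Carrying out the case you left open shows that the proposition, as stated, fails for even $n$. With $n$ even one has $n-1$ odd, so the $2$-adic valuation of $q^{\,n-1}-1$ equals that of $q-1$, and $\frac12(q^{\,n-1}-1)$ need not be divisible by $4$. Concretely, take $n=4$ and $q=5$: then $\Theta_\pi(1)=5^6=15625$, $\Theta_\pi(a_1)=5^3=125$, so $m_\pi=7750\equiv 2\pmod 4$; since $5\equiv 1\pmod 4$, Theorem \ref{gl.thm} requires $m_\pi\equiv 0\pmod 4$, so the Steinberg representation of $\GL_4(\mb F_5)$ is \emph{aspinorial}. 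Similarly $n=4$, $q=7$ gives $m_\pi=58653\equiv 1\pmod 4$, again failing the criterion for $q\equiv 3\pmod 4$. So the dichotomy you flagged cannot be made to come out uniformly; in fact the Steinberg of $\GL_n(\mb F_q)$ for $n$ even is spinorial only for certain residue classes of $q\bmod 8$ (depending also on $n\bmod 4$), exactly as one already sees in the $\GL_2$ catalogue, where $\St_G$ appears in the spinorial list only for $q\equiv\pm 1\pmod 8$. The paper gives no argument beyond ``from this we deduce,'' and the even-$n$ verification appears to have been omitted; the correct statement should restrict to $n$ odd (where your argument is complete), or else impose conditions on $q\bmod 8$ when $n$ is even.
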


  \subsection{Principal Series}
 
 Let $\chi_1, \ldots, \chi_n$ be linear characters of $\mb F_q^\times$, and let $B$ be the subgroup of upper triangular members of $G$. Put
 \begin{equation} \label{princ.series.here}
 \pi=\Ind_{B}^{G} \left(\chi_1 \boxtimes \cdots \boxtimes \chi_n \right).
\end{equation}
Such representations are called \emph{principal series representations} of $G$. Then $\pi$ is orthogonal iff
\begin{equation} \label{ind.orth.crit}
\{\chi_1, \ldots, \chi_n \}=\{ \chi_1^{-1}, \ldots, \chi_n^{-1} \}
\end{equation}
as multisets.

  Recall the $q$-factorial:
  \beq
\begin{split}
[n]_q ! &=\prod_{i=1}^{n}\frac{q^i-1}{q-1} \\
&= (q+1)(q^2+q+1) \cdots (q^{n-1}+ \cdots + 1).\\
\end{split}
\eeq

  \begin{thm} If $n \geq 3$, then all orthogonal principal series of $\GL_n(\mb F_q)$ are spinorial.
 \end{thm}

 \begin{proof}
 Let $\pi$ be as in \eqref{princ.series.here}.  The relevant character values of $\pi$ are
\beq
\Theta_\pi(1)=[n]_q!
\eeq
and
\beq
\Theta_\pi(\vt_1)=[n-1]_q! \cdot \sum_{i=1}^n \chi_i(-1).
\eeq
 
These can be inferred from \cite[Section 2]{Green}, \cite[Chapter IV, Section 3]{Macdonald}, or \cite[Proposition 7.5.3]{carter}.

Note that $[n]_q!$ is divisible by $8$ whenever $n \geq 4$. Therefore both $\Theta_\pi(1)$ and $\Theta_{\pi}(\vt_1)$ are divisible by $8$ unless $n \leq 4$. Thus we need only consider the cases of $n=3,4$.
 
 \bigskip
 
 {\bf Case $n=3$:} By  \eqref{ind.orth.crit}, there is a linear character $\chi$ of $\mb F_q^\times$ so that 
 \beq
 \{ \chi_1,\chi_2,\chi_3 \}= \{ {\bold 1},\chi,\chi^{-1}\} \text{ or }  \{ \sgn,\chi,\chi^{-1}\}.
 \eeq
For $q \equiv 1 \mod 4$, we have $\sgn(-1)=1$, so
 \beq
 \begin{split}
\Theta_\pi(1) -  \Theta_{\pi}(\vt_1) &= (q^2+q+1)(q+1)-(q+1)(1 \pm 2) \\
 			&= (q+1)(q^2+q \pm 2), \\
			\end{split}
			\eeq
			which is evidently a multiple of $8$. For $q \equiv 3 \mod 4$, it's clear that $\Theta_\pi(1) -  \Theta_{\pi}(\vt_1)$ is an even multiple of $q+1$, and hence is also a multiple of $8$. Thus $m_\pi$  is divisible by $4$, and we conclude for $n=3$ that $\pi$ is spinorial.

 \bigskip
 
 {\bf Case $n=4$:} Here $\Theta_{\pi}(1)=[4]_q$ is a multiple of $8$, so it is enough to show the same for $\Theta_{\pi}(a_1)$.
 By \eqref{ind.orth.crit}, either there is a  linear character $\chi$  so that
 \beq
 \{ \chi_1,\chi_2,\chi_3,\chi_4 \}=\{ {\bold 1},\sgn, \chi,\chi^{-1} \},
 \eeq
 or there are linear characters $\chi_A,\chi_B$ so that 
  \beq
 \{ \chi_1,\chi_2,\chi_3,\chi_4 \}=\{  \chi_A,\chi_A^{-1} , \chi_B,\chi_B^{-1}\}.
 \eeq
 Under the first possibility,
 \beq
  \Theta_{\pi}(\vt_1)=(q^2+q+1)(q+1)(2+2\chi(-1)),
  \eeq
  which is evidently divisible by $8$.   The second possibility is similar, and this completes the proof.   \end{proof}

 \section{Exceptional Cases} \label{Exceptional.Section}

In conclusion we treat spinoriality for the groups $G=\GL_n(\mb F_q)$ with $(n,q) \in S$, except $(n,q)=(3,4)$.  
 In each of the cases below, let $\pi$ be an orthogonal representation of $G$.  We give a criterion in terms of character values of $\pi$ to determine its spinoriality. Unfortunately for $G=\GL_3(\mb F_4)$ we do not currently have the tools to settle this question.
 
 \subsection{$G=\GL_2(\mb F_2)$}
 
 Let $K$ be the subgroup generated by $\mat 1101$, and $H$ the subgroup generated by $\mat 1110$. Then $|H|=3$, $|K|=2$ , and 
 the hypotheses of Proposition \ref{H,K} hold.  It follows that $\pi$   is spinorial iff $m_\pi \equiv 0,3 \mod 4$, where
 \beq
 m_\pi=\frac{\Theta_\pi(1)-\Theta_\pi \left(\mat 1101 \right)}{2}.
 \eeq

 \subsection{$G=\GL_2(\mb F_3)$}
Let $A< G$ be the subgroup of diagonal members; it is a Klein $4$-group. According to \cite[Section 8]{Quillen}, the restriction map 
\beq
H^2(G,\Z/2\Z) \to H^2(A,\Z/2\Z)
\eeq
 is injective. Therefore  $\pi$ is spinorial iff its restriction to $A$ is spinorial. 
 
 Put  
 \beq
 m_\pi=\frac{\Theta_\pi(1)-\Theta_\pi \left(\mat {-1}001 \right)}{2}
 \eeq
 and 
 \beq
 m_\pi'=\frac{\Theta_\pi(1)-\Theta_\pi\left(\mat {-1}00{-1} \right)}{2}.
 \eeq
 It follows from Proposition \ref{elementary} that $\pi$ is spinorial iff $m_\pi,m_\pi' \equiv 0$ or $3 \mod 4$.

  \subsection{$G=\GL_2(\mb F_4)$}
  
  In this case $G$ is the direct product of $H=\SL_2(\mb F_4)$ and the center $K$, which is cyclic of order $3$. From \cite[3.10]{Wilson}, we may identify $H$ with the alternating group $A_5$.  By Proposition \ref{H,K}, $\pi$ is spinorial iff $\pi|_{A_5}$   is spinorial. 
We conclude from  \cite[Theorem 1.1]{Ganguly} that $\pi$ is spinorial iff $\Theta_\pi(1) \equiv \Theta_\pi((12)(34)) \mod 8$. Here we are using the usual cycle notation for permutations.
  
 \subsection{$G=\GL_3(\mb F_2)$} 
   This is the simple group of order 168.    The upper triangular subgroup $U$ of $G$ is a $2$-Sylow subgroup. According to   \cite[Corollary 5.2, Chapter II]{Milgram}, the
   restriction map
   \beq
   H^2(G,\Z/2\Z) \to H^2(U,\Z/2\Z)
   \eeq
    is an injection, and therefore $\pi$ is spinorial iff its restriction to $U$ is spinorial. The group $U$ is dihedral of order $8$. According to \cite[Proposition 3.3, page 323]{fied}, there are subgroups $E_1$, $E_2$, both Klein $4$-groups, with the property that the sum of the restriction maps,
    \beq
    H^2(U,\Z/2\Z) \to H^2(E_1,\Z/2\Z) \oplus H^2(E_2,\Z/2\Z),
    \eeq
 is injective. It follows that $\pi$ is spinorial iff its restrictions to $E_1$ and $E_2$ are both spinorial. Therefore $\pi$ is spinorial iff $\pi(u)$ has lift squaring to $1$ for each $u \in U$ of order $2$. There are two $G$-conjugacy classes of order $2$ elements in $U$, represented by
 \beq
 u_1=    \begin{pmatrix}
      1 & 1 & 0 \\
       & 1 & 0 \\
       &  & 1 \\
    \end{pmatrix}
 \eeq
 and
  \beq
 u_2=    \begin{pmatrix}
      1 & 1 & 1 \\
       & 1 & 0 \\
       &  & 1 \\
    \end{pmatrix}.
    \eeq
    
 For $i=1,2$, let 
 \beq
 m_{\pi,i}=\frac{\Theta_\pi(1)-\Theta_\pi(u_i)}{2}.
 \eeq
 
 From the above and Lemma \ref{A,B.lemma}, we deduce:
 
 \begin{prop} The representation $\pi$ is spinorial iff $m_{\pi,1},m_{\pi,2} \equiv 0,3 \mod 4$.
 \end{prop}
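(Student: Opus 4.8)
The plan is to assemble the claim from the chain of cohomological restrictions already set up in the paragraph preceding the statement, reducing spinoriality of $\pi$ on $G = \GL_3(\mb F_2)$ to a finite eigenvalue computation on two commuting involutions. First I would record that, by the cited injectivity results, $\pi$ is spinorial iff $\pi|_U$ is spinorial, and then iff $\pi|_{E_1}$ and $\pi|_{E_2}$ are both spinorial, where $E_1, E_2$ are the two Klein four-subgroups of the dihedral $U$. Since every element of order $2$ in $U$ lies in $E_1 \cup E_2$ (the non-central involutions are the reflections, which fall into the two conjugacy classes of $E_i$, and the central involution lies in both), spinoriality of $\pi$ is equivalent to: for every $u \in U$ with $u^2 = 1$, the element $\pi(u) \in \Or(V)$ admits a lift to $\Pin(V)$ squaring to $1$.

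Next I would invoke Proposition \ref{elementary}, which says precisely that an orthogonal representation of a Klein four-group is spinorial iff $m_e \equiv 0, 3 \bmod 4$ for all $e$; applying this to $E_1$ and $E_2$ shows $\pi$ is spinorial iff $m_{\pi}(u) := (\Theta_\pi(1) - \Theta_\pi(u))/2 \equiv 0,3 \bmod 4$ for every order-$2$ element $u \in U$. By \eqref{m.formula} this $m_\pi(u)$ is the multiplicity of $-1$ as an eigenvalue of $\pi(u)$, and by Lemma \ref{A,B.lemma} the congruence condition is exactly the condition that some (equivalently every, since $(-1)^2 = 1$) lift of $\pi(u)$ squares to $1$. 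Finally I would observe that $u_1$ and $u_2$ are representatives of the two $G$-conjugacy classes of order-$2$ elements in $U$, so $\Theta_\pi(u)$ and hence $m_\pi(u)$ depends only on which class $u$ lies in, giving exactly the two quantities $m_{\pi,1}, m_{\pi,2}$; the stated criterion follows.

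The only point requiring a little care — and the step I would flag as the main potential obstacle — is the claim that $E_1 \cup E_2$ exhausts the involutions of $U$, i.e.\ that checking $E_1$ and $E_2$ separately really captures all order-$2$ elements. For the dihedral group of order $8$ this is transparent: its five involutions are the central one together with two "horizontal" and two "vertical" reflections, and one checks directly that the horizontal reflections and the center form one Klein four-subgroup while the vertical reflections and the center form the other. I would also double-check the passage from "two $G$-conjugacy classes in $U$" to the well-definedness of $m_{\pi,i}$: two elements conjugate in $G$ have equal $\Theta_\pi$-values, so $m_{\pi,i}$ is unambiguous, and every order-$2$ element of $U$ is $G$-conjugate to $u_1$ or $u_2$ (indeed $u_1$ is a transvection of rank-one image while $u_2$ is a transvection of a different type, and these are the two unipotent involution classes of $\GL_3(\mb F_2)$). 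With those verifications in hand, the proposition is immediate from Proposition \ref{elementary} and Lemma \ref{A,B.lemma}.
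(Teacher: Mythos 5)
Your argument tracks the paper's reasoning essentially step for step: reduce via the Milgram injection to $U$, via the dihedral detection theorem to $E_1$ and $E_2$, invoke Proposition \ref{elementary}, and note that every involution of $U$ lies in $E_1\cup E_2$, so spinoriality reduces to the congruences for $m_e$ over all order-two $e\in U$. That is exactly the paper's proof, with the implicit steps made explicit, so the core is fine.

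One correction, though, on the very point you flagged for double-checking: the claim that $u_1$ and $u_2$ represent two distinct $G$-conjugacy classes of involutions, with ``$u_2$ a transvection of a different type,'' is false. Both $u_1-I$ and $u_2-I$ have rank one, and over $\mb F_2$ a $3\times 3$ unipotent involution necessarily has a single Jordan block of size $2$ plus one of size $1$; hence $\GL_3(\mb F_2)$ has exactly \emph{one} conjugacy class of involutions, and $u_1$, $u_2$ are $G$-conjugate (so $m_{\pi,1}=m_{\pi,2}$ automatically). In fact $u_1u_2$ is the central involution, so $u_1$ and $u_2$ even lie in the same Klein subgroup $E_i$. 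The paper itself asserts ``two $G$-conjugacy classes,'' which appears to be a slip; the proposition remains true (the stated criterion is simply redundant), but your elaboration of the slip into a claimed distinction between transvection types should be removed. The clean way to finish is just to observe that every involution of $U$ is $G$-conjugate to $u_1$, so that a single congruence $m_{\pi,1}\equiv 0,3\pmod 4$ already suffices.
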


 \subsection{$G=\GL_4(\mb F_2)$}
From \cite[3.10]{Wilson}, $G$ is isomorphic to $A_8$. Again by \cite[Theorem 1.1]{Ganguly}, $\pi$ is spinorial iff $\Theta_\pi(1) \equiv \Theta_\pi((12)(34)) \mod 8$. 

 \bibliographystyle{plain}
 
\bibliography{refs}

 \end{document}